\newtheorem{assumption}{Assumption}
\newtheorem{theorem}{Theorem}
\newtheorem{remark}{Remark}
\newacronym{ADP}{ADP}{Adaptive Dynamic Programming}
\newacronym{RL}{RL}{Reinforcement Learning}
\newacronym{ESO}{ESO}{Extended State Observer}
\newacronym{ETM}{ETM}{Event-Triggered Mechanism}
\newacronym{ETC}{ETC}{Event-Triggered Control}
\newacronym{HJB}{HJB}{Hamilton–Jacobi–Bellman}
\newacronym{ZOH}{ZOH}{Zero-Order Hold}
\newacronym{UBS}{UBS}{Ultimately Bounded and Stable}
\newacronym{ADRC}{ADRC}{Active Disturbance Rejection Control}
\newacronym{BIBS}{BIBS}{Bounded-Input Bounded-State}
\newacronym{IBE}{IBE}{Instantaneous Bellman Error}
\newacronym{EBE}{EBE}{Extrapolated Bellman Error}
\newacronym{ITES}{ITES}{Integral of Time-weighted Euclidean norm of State}
\newacronym{BE}{BE}{Bellman Error}
\newacronym{ET}{ET}{Event Trigger}
\newacronym{LS}{LS}{Least Square}
\newacronym{MIMO}{MIMO}{Multi Input Multi Output}
\newacronym{PE}{PE}{Persistence of Excitation}
\newacronym{UUB}{UUB}{Uniform Ultimate Boundedness}
\newacronym{MIET}{MIET}{Minimum Inter-Event Time}
\begin{document}
\begin{frontmatter}

\title{Deep Reinforcement Learning Optimization for Uncertain Nonlinear Systems via Event-Triggered Robust Adaptive Dynamic Programming} 


\author[First]{Ningwei Bai} \and
\author[First]{Chi Pui Chan} \and
\author[First]{Qichen Yin} \and
\author[Second]{Tengyang Gong} \and
\author[First]{Yunda Yan} \and
\author[First]{Zezhi Tang}

\address[First]{Department of Computer Science, University College London, Gower Street, London, WC1E 6BT United Kingdom (e-mail: zezhi.tang@ucl.ac.uk).}
\address[Second]{Department of Electrical and Electronic Engineering, University of Manchester, Oxford Rd, Manchester, M13 9PL United Kingdom}

\begin{abstract}
This work proposes a unified control architecture that couples a \gls{RL}-driven controller with a disturbance-rejection \gls{ESO}, complemented by an \gls{ETM} to limit unnecessary computations. The \gls{ESO} is utilized to estimate the system states and the lumped disturbance in real time, forming the foundation for effective disturbance compensation. To obtain near-optimal behavior without an accurate system description, a value-iteration-based \gls{ADP} method is adopted for policy approximation. The inclusion of the \gls{ETM} ensures that parameter updates of the learning module are executed only when the state deviation surpasses a predefined bound, thereby preventing excessive learning activity and substantially reducing computational load. A Lyapunov-oriented analysis is used to characterize the stability properties of the resulting closed-loop system. Numerical experiments further confirm that the developed approach maintains strong control performance and disturbance tolerance, while achieving a significant reduction in sampling and processing effort compared with standard time-triggered \gls{ADP} schemes.
\end{abstract}

\begin{keyword}
Reinforcement learning; Event-triggered control; Uncertain nonlinear systems; Adaptive dynamic programming
\end{keyword}

\end{frontmatter}

\section{Introduction}

Learning-based methods have become a fundamental paradigm in modern engineering systems, enabling algorithms to improve performance through data-driven adaptation without relying solely on explicit mathematical models. Over the past decade, advances in machine learning, particularly in function approximation, optimization, and representation learning, have significantly expanded the capability of intelligent systems operating under uncertainty, compared to traditional analytical methods~(\cite{qinRealTimeRemainingUseful2023,zhangTabNetLocallyInterpretable2024,huHierarchicalTestingRabbit2025}). These approaches have been increasingly adopted in control, robotics, and even generative language models~(\cite{luDeterminationSurfaceCrack2020, zhaoRealTimeObjectDetection2024,tangTemporalScaleTransformer2025,yaoLEARNINGADOPTIONUNDERSTANDING2025}). However, conventional model-based techniques may be limited in their ability to handle nonlinearities, unknown disturbances, or incomplete system knowledge.

Reinforcement Learning (RL) has gained attention for complex decision-making and control in uncertain, dynamic environments~(\cite{tangReinforcementLearningBasedApproach2024}). In control engineering, \gls{RL}-based methods offer a data-driven alternative to classical model-based designs. This is useful when accurate system models are difficult to obtain. Among these methods, \gls{ADP} integrates \gls{RL} with optimal control theory. It facilitates near-optimal control of nonlinear systems by approximating value functions and control policies through function approximators. This eliminates the need to explicitly solve the \gls{HJB} equation~(\cite{lewisReinforcementLearningAdaptive2009}). 
However, conventional \gls{ADP} frameworks often rely on continuous or periodic updates to neural network parameters. These updates impose significant computational burdens and may lead to overfitting to transient disturbances or noise.

Event-triggered strategies have been widely adopted in diverse control applications, including networked and embedded systems, multi-agent coordination, and resource-constrained robotic platforms (\cite{onuohaStressMatrixBasedFormation2024,onuohaDiscreteTimeStressMatrixBased2024}). Meanwhile, the \gls{ETM} has been widely employed in both control and \gls{ADP} frameworks to reduce computational load~(\cite{hanEventtriggeredbasedOnlineIntegral2024a, heemelsIntroductionEventtriggeredSelftriggered2012, tabuadaEventTriggeredRealTimeScheduling2007}). Unlike time-driven schemes, \gls{ETM}s update only when systems meet a state- or error-based condition. State deviation or estimation error often directly triggers updates. This approach reduces redundant updates and preserves closed-loop stability~(\cite{dongEventTriggeredAdaptiveDynamic2017b,xueAdaptiveDynamicProgramming2020,onuohaStressMatrixBasedFormation2024}). By limiting updates to key events, event-triggered \gls{ADP} boosts efficiency and yields policies less sensitive to disturbances.

Despite these advantages, engineers must ensure robustness against external disturbances and modeling uncertainties. In practice, environmental perturbations, unmodeled dynamics, nonlinear couplings, and parameter uncertainties cause disturbances. Many robust control approaches employ feedback to reduce perturbations rather than explicitly use feedforward compensation~(\cite{tang2019control,tangUnmatchedDisturbanceRejection2016,tangDisturbanceObserverBasedOptimal2024}). In this context, a \gls{ESO} estimates the original states and accumulated interference in real time. This allows proactive compensation of parameter mismatches, unmodeled dynamics, and external perturbations in nonlinear systems~(\cite{luoAdaptiveAffineFormation2020, tangOutputTrackingUncertain2024, hanPIDActiveDisturbance2009, chenDisturbanceObserverBasedControlRelated2016, ranNewExtendedState2021c, puClassAdaptiveExtended2015, tangDisturbanceRejectionIterative2019}).


Recent work combines \gls{ESO}-based disturbance rejection with \gls{RL} for uncertain nonlinear systems~(\cite{ranReinforcementLearningBasedDisturbanceRejection2022,tangReinforcementLearningBasedOutput2024}). However, these \gls{ESO}–\gls{RL} schemes primarily operate in a time-driven manner: both the controller and learning updates run continuously or periodically, lacking an event-triggered learning mechanism. Many continuous-time \gls{ADP} designs also impose restrictive \gls{PE} conditions for parameter convergence~(\cite{kamalapurkarConcurrentLearningbasedApproximate2016}), making them hard to verify and enforce in practice.

Inspired by these observations, we develop a composite control framework for output-feedback control of uncertain nonlinear systems with lumped disturbances. The main contributions are summarized as follows:

\begin{enumerate}
    \item A unified control structure incorporating \gls{ETM} is developed, in which \gls{ESO}-based state estimation, disturbance compensation, and controller updates occur only at triggering instants. The resulting unified composite control framework enables an aperiodic and computationally efficient realization of output-feedback \gls{RL} control. By integrating \gls{ESO}-based disturbance rejection with event-triggered \gls{RL}, this work establishes a unified architecture not addressed in existing literature.
    \item We propose an event-triggered learning rule for a simulation-of-experience \gls{ADP} scheme, where critic and actor networks are updated solely at triggering instants using both instantaneous and \gls{EBE}. In contrast to existing \gls{ESO}–\gls{RL} frameworks~(\cite{ranReinforcementLearningBasedDisturbanceRejection2022,tangReinforcementLearningBasedOutput2024}), which employ continuous or periodic learning, the proposed mechanism produces an aperiodic, data-efficient adaptation. The associated analysis shows that practical stability and \gls{UUB} are achieved without imposing a classical \gls{PE} condition~(\cite{kamalapurkarConcurrentLearningbasedApproximate2016}), while avoiding redundant high-frequency learning updates.
\end{enumerate}

The remainder of the paper is structured as follows: Section II presents the problem and the system model.
Section III describes the proposed composite control framework.
Section IV describes the overall composite control framework.
The simulation results are demonstrated in Section V, and
Section VI summarizes this paper and outlines potential directions for future research.

\section{Problem Formulation}

In this paper, we identify the control of a set of uncertain affine nonlinear systems described by
\begin{equation}
\left\{\begin{array}{l}
\dot{z}=f_z(x, z, \eta), \\
\dot{x}=A x+B[f(x, z, \eta)+g(x, z, \eta) u], \\
y=C x,
\end{array}\right.
\label{equ:non_linear_system}
\end{equation}
where $x =\left[x_1, \dots, x_n\right]^{\mathrm{T}} \in \mathbb{R}^n$ denotes the state of the measured subsystem with relative degree $n$; $z \in \mathbb{R}^p$ represents the zero-dynamics state; $\eta \in \mathbb{R}$ denotes an external disturbance or uncertain parameter; $u \in \mathbb{R}$ is the control input; $f_z: \mathbb{R}^n \times \mathbb{R}^p \times \mathbb{R} \to \mathbb{R}^p$ is a smooth nonlinear mapping describing the evolution of zero dynamics; $f, g: \mathbb{R}^n \times \mathbb{R}^p \times \mathbb{R} \to \mathbb{R}$ are uncertain nonlinear functions characterizing the input and drift dynamics gain of the $x$ subsystem; and $A \in \mathbb{R}^{n \times n}$, $B\in \mathbb{R}^{n \times 1}$ and $C\in \mathbb{R}^{1 \times n}$ are the standard companion matrices defining a nominal chain-of-integrators structure of the output dynamics.

To enable subsequent observer and controller design, we impose the following standard assumptions.

\begin{assumption}
The external signal $\eta(t)$ as well as its time derivative $\dot\eta(t)$ are bounded for all $t \ge 0$.
\end{assumption}

\begin{assumption}
The zero dynamics $\dot{z}=f_z(x, z, \eta)$ with input $(x, \eta)$ is \gls{BIBS} stable.
\end{assumption}

In this paper, the nonlinear system dynamics with uncertainty are modeled as follows:
\begin{equation}\notag
\begin{aligned}
f(x, z, \eta) &=f_0(x)+\Delta f(x, z, \eta), \\
g(x, z, \eta) &=g_0(x)+\Delta g(x, z, \eta),
\end{aligned}
\end{equation}
where $f_0, g_0: \mathbb{R}^n \to \mathbb{R}$ denote the known nominal system dynamics; and $\Delta f, \Delta g: \mathbb{R}^n \times \mathbb{R}^p \times \mathbb{R} \to \mathbb{R}$ represent unknown disturbances and model uncertainties that may depend on the full state of the system $(x,z)$ and the external signal $\eta$. 

Following the \gls{ADRC} philosophy (\cite{hanPIDActiveDisturbance2009}), the general uncertainty is transferred to a broader state:
\begin{equation}
x_{n+1} \triangleq \Delta f(x, z, \eta)+\Delta g(x, z, \eta) u,
\end{equation}
According to this definition, the $n$-th subsystem dynamics can be rewritten as $\dot{x}_n=x_{n+1}+f_0(x)+g_0(x) u$, so that the overall system becomes an $(n+1)$th order augmented integrator chain perturbed by the unknown term $\dot{x}_{n+1}$.

To quantify performance, we consider the nominal compensated subsystem and assign the infinite-horizon cost functional
\begin{equation}\label{equ:cost_function}
J\left(x_0\right)=\int_0^{\infty}\left(Q(x(\tau))+u_0(\tau)^T R u_0(\tau)\right) d \tau,
\end{equation}
where $x_0=x(0)$ is the initial condition, $Q: \mathbb{R}^n \to \mathbb{R}_+$ is a positive definite state penalty, $R>0$ is a control-weighting matrix, and $u_0$ denotes the component of the input acting on the nominal dynamics after uncertainty compensation.

The associated optimal control problem is
\begin{equation}\notag
u_0^*=\arg \min _{u_0} J\left(x_0\right),
\end{equation}
and the optimal policy will be approximated online via the \gls{RL} mechanism developed later.


\begin{remark}
The purpose of this paper is to develop a \gls{ESO}-based \gls{RL} disturbance rejection scheme equipped with an \gls{ETM}. The proposed controller aims to stabilize the system under lumped uncertainties while achieving near-optimal performance with reduced control update frequency.
\end{remark}

\section{Composite Control Framework}

\subsection{\gls{ESO} Design}
First, a \gls{ESO} is designed to predict both the state of the system and all disturbances,
following the standard \gls{ADRC} structure~(\cite{hanPIDActiveDisturbance2009,chenDisturbanceObserverBasedControlRelated2016}):
\begin{equation}
\left\{\begin{array}{l}
\dot{\hat{x}}_i=\hat{x}_{i+1}+\frac{l_i}{\epsilon^i}\left(y-\hat{x}_1\right) \quad i=1, \ldots, n-1, \\
\dot{\hat{x}}_n=\hat{x}_{n+1}+\frac{l_n}{\epsilon^n}\left(y-\hat{x}_1\right)+f_0(\hat{x})+g_0(\hat{x}) u, \\
\dot{\hat{x}}_{n+1}=\frac{l_{n+1}}{\epsilon^{n+1}}\left(y-\hat{x}_1\right),
\end{array}\right.
\label{equ:ESO}
\end{equation}
where $\hat{x}=\left[\hat{x}_1, \ldots, \hat{x}_n, \hat{x}_{n+1}\right]^{\mathrm{T}}$, $\epsilon>0$ is a small positive constant adjusting the observer bandwidth, and $L=\left[l_1, \ldots, l_{n+1}\right]^{\mathrm{T}}$ is chosen that the following matrix is Hurwitz: 
\begin{equation}\notag
E=\left[\begin{array}{ccccc}
-l_1 & 1 & 0 & \cdots & 0 \\
-l_2 & 0 & 1 & \cdots & 0 \\
\vdots & \vdots & \vdots & \ddots & \vdots \\
-l_n & 0 & 0 & \cdots & 1 \\
-l_{n+1} & 0 & 0 & \cdots & 0
\end{array}\right] \in \mathbb{R}^{(n+1) \times(n+1)}.
\end{equation}

However, since the observer gains are scaled by $\epsilon^{-i}$, a small $\epsilon$ yields a high bandwidth \gls{ESO} that responds rapidly to state deviations but may induce a pronounced peaking phenomenon during the initial transient. 
To mitigate this effect, we employ a widely used smooth saturation technique to constrain the observer outputs. Let the saturated observer states be defined as
\begin{equation}\notag
\bar{x}_i=M_i s\left(\frac{\hat{x}_i}{M_i}\right), \quad i=1, \ldots, n+1,
\end{equation}
where $M_i>0$ are design bounds selected so that the saturation remains inactive during steady-state operation, and $s(\cdot)$ is an odd, continuously differentiable saturation-like function given by
\begin{equation}\notag
s(v)= \begin{cases}v, & 0 \leq v \leq 1 \\ v+\frac{v-1}{\varepsilon}-\frac{v^2-1}{2 \varepsilon}, & 1 \leq v \leq 1+\varepsilon \\ 1+\frac{\varepsilon}{2}, & v>1+\varepsilon,\end{cases}
\end{equation}
which satisfies $0 \leq s^{\prime}(v) \leq 1$, $|s(v)-\operatorname{sat}(v)| \leq \frac{\varepsilon}{2}, \forall v \in \mathbb{R}$. 
For later use, we denote $\bar{x}=\left[\bar{x}_1, \ldots, \bar{x}_{n+1}\right]^{\mathrm{T}} \in \mathbb{R}^{n+1}$ and observe that $\dot{\bar{x}}_i=s^{\prime}\left(\frac{\hat{x}_i}{M_i}\right) \dot{\hat{x}}_i. \quad i=1 \ldots, n+1$.

\subsection{\gls{ADP} Design}
Second, we present the \gls{ADP} design. An actor–critic architecture that is based on a neural network is employed, in which the critic approximates the optimal value function and the actor represents the corresponding optimal policy.

To facilitate the theoretical development of the \gls{ADP}-based controller and the associated optimized control law, the following standard assumptions are introduced:

\begin{assumption}
There exist constants $g_{\min }, g_{\max }>0$ such that
$$
g_{\min } \leq \inf _{x \in \mathcal{X}}\left|g_0(x)\right| \leq \sup _{x \in \mathcal{X}}\left|g_0(x)\right| \leq g_{\max }.
$$
Moreover, on $\Omega \triangleq \mathcal{X} \times \mathcal{Z} \times \mathcal{W}$ (where $\mathcal{W}$ is a compact set containing all admissible values of $\eta$ from Assumption~2, $\mathcal{X}$ is a compact set containing all admissible system states $x$, and $\mathcal{Z}$ is a bounded positively invariant set for the zero-dynamics state $z$), the relative mismatch between the true and nominal input gains is bounded
\begin{equation}\notag
\kappa_g \triangleq \sup _{(x, z, \eta) \in \Omega} \frac{\left|g(x, z, \eta)-g_0(x)\right|}{\left|g_0(x)\right|}<1.
\end{equation}
\end{assumption}


As shown in \eqref{equ:cost_function}, the associated value equation can be derived as follows
\begin{equation}\notag
V(x)=\min _{u} J(x)=\int_0^{\infty}\left(Q(x(\tau))+u(\tau)^T R u(\tau)\right) d \tau.
\end{equation}
The optimal value equation \(V(x)\) satisfies the \gls{HJB} equation
\begin{equation}\notag
0=\min _u\left[Q(x)+u^T R u+\nabla V(x)^T(f(x,z,\eta)+g(x,z,\eta) u)\right].
\end{equation}
gives the corresponding optimized control law 
\begin{equation}\label{equ:opt_law}
u^*(x)=-\frac{1}{2} R^{-1} g(x,z,\eta)^T \nabla V(x).
\end{equation}

Substituting \eqref{equ:opt_law} into the \gls{HJB} yields
\begin{equation}\notag
\begin{aligned}
0 &= Q(x) + \nabla V(x)^{T} f(x,z,\eta)\\
&\quad - \frac{1}{4}\, \nabla V(x)^{T} g(x,z,\eta) R^{-1} g(x,z,\eta)^{T} \nabla V(x).
\end{aligned}
\end{equation}

This expression provides the optimality condition for the value equation and forms the basis for the subsequent critic approximation in the \gls{ADP} framework.

\subsection{Training Process}

To avoid explicitly solving the HJB equation, we adopt an actor–critic architecture enhanced with an ESO-based extrapolation mechanism (\cite{ranReinforcementLearningBasedDisturbanceRejection2022}). The critic network approximates the value function using a linearly parameterized structure.

\begin{equation}\notag
V\left(\bar{x} ; W_c\right)=W_c^{\mathrm{T}} \phi(\bar{x}),
\end{equation}

where $W_c$ denotes the critic weight vector and $\phi(x)$ represents a basis vector.

The actor approximates the control policy as
\begin{equation}\notag
u_0\left(\bar{x} ; W_a\right)=-\frac{1}{2} R^{-1} g_0(\bar{x}) B^{\mathrm{T}} \phi_x^{\mathrm{T}}(\bar{x}) W_a,
\end{equation}
where $W_a$ is the actor weight vector and $\phi_x$ is the gradient of the basis function vector

Using the \gls{ESO}-estimated state $\hat{x}$, the \gls{IBE} is defined as
\begin{equation}\label{equ:IBE}
\begin{aligned}
\varepsilon_t \triangleq\; &
V_x(\bar{x}, W_a)\,
\Big[A\bar{x} 
+ B\big(f_0(\bar{x}) + g_0(\bar{x}) u_0(\bar{x}, W_a)\big)
\Big] \\[4pt]
&\quad +\, Q(\bar{x})
+ u_0^{T}(\bar{x}, W_c)\, R\, u_0(\bar{x}, W_c),
\end{aligned}
\end{equation}
which measures the deviation of the current actor–critic pair from the HJB optimality condition along the real trajectory. 

To enhance state-space coverage and improve robustness, an extrapolated dataset $\mathcal{X}_E=\left\{\xi^i\right\}_{i=1}^N$ is generated in the admissible domain, and the approximate \gls{EBE} is defined as follows:
\begin{equation}\label{equ:EBE}
\begin{aligned}
\varepsilon_i \triangleq\; &
V_x(\xi^i, W_a)\,
\Big[
A\xi^i 
+ B\big(f_0(\xi^i) + g_0(\xi^i) u_0(\xi^i, W_a)\big)
\Big] \\[4pt]
&\quad +\, Q(\xi^i)
+ u_0^{T}(\xi^i, W_c)\, R\, u_0(\xi^i, W_c).
\end{aligned}
\end{equation}
For least-squares type learning, we introduce the regressor
\begin{equation}\notag
\zeta = \phi(\bar{x}) \left[ A\bar{x} + B \bigl( f_0(\bar{x}) + g_0(\bar{x})\, u_0(\bar{x}, W_a) \bigr) \right],
\end{equation}
and the normalization term
\begin{equation}\notag
\sigma=1+\rho \zeta^{\mathrm{T}} \Psi \zeta,
\end{equation}
where $\rho>0$ is a constant normalization gain and the gain matrix $\Psi$ evolves by
\begin{equation}\notag
\dot{\Psi}=\left(\gamma \Psi-\alpha_{v 1} \frac{\Psi \zeta \zeta^{\mathrm{T}} \Psi}{\sigma^2}\right) \mathbf{1}_{\left\{\|\Phi\| \leq \delta_1\right\}}, \quad\|\Phi(0)\| \leq \delta_1,
\end{equation}
where $\gamma > 0$ is a constant forgetting factor, and the weight update for the critic is given by
\begin{equation}\label{equ:critic_update}
\dot{W_v} = -\alpha_{v1} \Psi \frac{\zeta}{\sigma}\varepsilon_i - \frac{\alpha_{v2}}{N} \Psi \sum_{i=1}^{N}{\frac{\zeta_i}{\sigma_i}\varepsilon_i},
\end{equation}
where $\delta_1>0$ is a saturation constant and constant $\alpha_{v1},\alpha_{v2}>0$ yielding a least-squares-type adaptation with improved convergence and numerical robustness.

The regressor and normalization term used for constructing the \gls{EBE} at the extrapolated sample points are defined as
\begin{equation}\notag
\begin{aligned}
\zeta_i &= \phi_x\left(\xi^i\right)\left[ A\xi^i 
+ B\left( f_0\left(\xi^i\right) + g_0\left(\xi^i\right) u_0\left(\xi^i, W_a\right) \right) \right],
\\[4pt]
\sigma_i &= 1 + \rho\, \zeta_i^{\mathrm{T}} \Psi \zeta_i,
\end{aligned}
\end{equation}
and the updated law for actor weight is
\begin{equation}\label{equ:actor_update}
\begin{aligned}
\dot{W}_a &= -\alpha_{c1}(W_a - W_v) - \alpha_{c2} W_a 
+ \frac{\alpha_{v1}\mathcal{H}_{t}^{\!T} W_a \zeta^{T}}{4\sigma}\, W_v \\[6pt]
&\quad + \sum_{i=1}^{N} 
\frac{\alpha_{v1}\mathcal{H}_{i}^{\!T} W_a \zeta_i^{T}}{4N\sigma_i}\, W_v,
\end{aligned}
\end{equation}\notag
where $\alpha_{c1},\alpha_{c2} > 0$ are constant adaptation gains and
\begin{equation}\notag
\begin{aligned}
& \mathcal{H}_t \triangleq \phi_x(\bar{x}) B g_0(\bar{x}) R^{-1} g_0^T(\bar{x}) B^T \phi_x^T(\bar{x}), \\
& \mathcal{H}_i \triangleq \phi_x\left(\xi^i\right) B g_0\left(\xi^i\right) R^{-1} g_0^T\left(\xi^i\right) B^T \phi_x^T\left(\xi^i\right).
\end{aligned}
\end{equation}
Finally, the resulting approximate optimal policy is expressed as
\begin{equation}\label{equ:u_update}
u=u_0(\bar{x},W_a)-\frac{\bar{x}_{n+1}}{g_0(\bar{x})}.
\end{equation}

\subsection{Event-Triggered Mechanism}


Let $\tau_k$ denote the $k$-th moment of triggering, and let $x(\tau_k)$ be the most recently transmitted state held by a \gls{ZOH} device. The event-triggering error is defined as
\begin{equation}\label{equ:et_error}
e(t) = x(\tau_k) - x(t), \quad t \in [\tau_k, \tau_{k+1}),
\end{equation}
that measures the mismatch between the last transmitted state and the current state. During each inter-event interval, the control policy and the neural-network parameters are held constant, i.e., $\pi(t) = \pi(x(\tau_k))$, and the actor–critic weights are frozen. When $\|e(t)\|$ exceeds a prescribed threshold, a new event is created, the state is transmitted, and both the control input and the network weights are updated.

To guarantee stability while avoiding unnecessary updates, the triggering threshold is chosen in a state-dependent and weight-adaptive form:
\begin{equation}\label{equ:threshold}
\delta(t)
= \sqrt{\frac{\lambda_{\min}(Q)\,\beta}{g_{max}^2 L_a^2 \,\|W_a(t)\|^2}} \; \|x(t)\|,
\end{equation}
that $\lambda_{\min}(Q)$ is the minimum eigenvalue of the cost matrix $Q$, $\beta \in (0,1)$ is a design constant introduced in the Lyapunov analysis, $g_{max}$ is a known upper bound of the nominal input gain $g_0(x)$, $L_a$ is the Lipschitz constant of the actor activation function, while $\|W_a(t)\|$ is an Euclidean norm of the actor weight vector. This construction links the triggering sensitivity to both the state magnitude and the current actor parameters: for large states or large weight norms, the condition becomes more stringent, enforcing timely updates; near the origin with well-converged weights, it becomes more relaxed, reducing redundant transmissions.

The triggering condition is explicitly given by 
\begin{equation}\label{equ:et_condition}
\|e(t)\|^2 > \delta(t)^2.
\end{equation}
An event is generated only when this inequality is satisfied. Under this rule, all of the closed-loop signals are uniformly bounded, while the number of updates is significantly reduced compared with a periodic scheme (\cite{dongEventTriggeredAdaptiveDynamic2017b}).

\section{Integrated Control Framework}

A unified control pipeline is depicted in Fig.~\ref{fig:pipeline}. At each control cycle, the \gls{ESO} updates the augmented state estimates, the event-triggered monitor tracks the deviation between the current and last transmitted states, and the \gls{ADP} either updates the actor–critic networks or holds the previous control input. A detailed procedure is summarized in Algorithm~\ref{alg:ETM-ESO-ADP}.

\begin{figure}[H]
\centering
\includegraphics[width=0.46\textwidth]{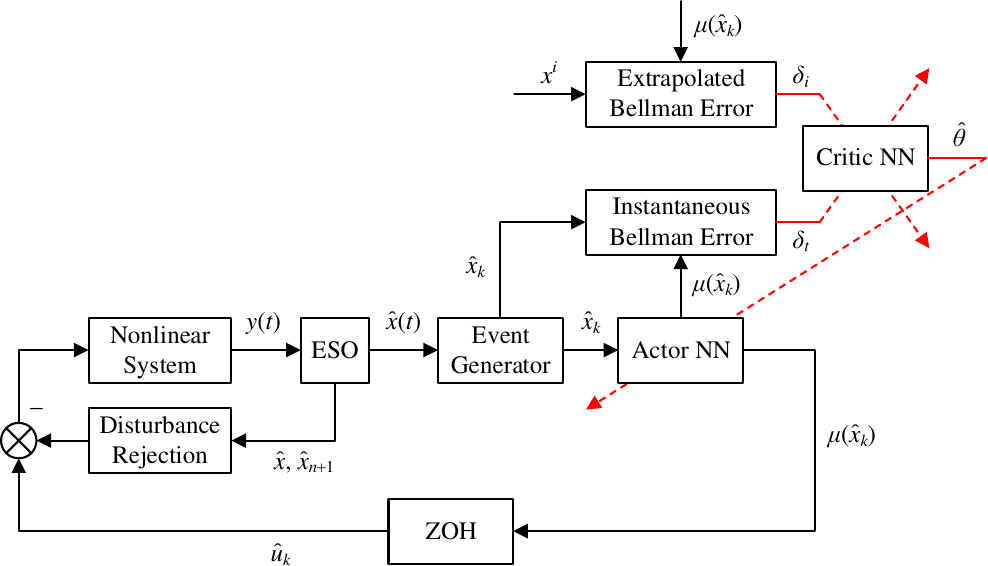}
\caption{Pipeline of the composite control framework.}
\label{fig:pipeline}
\end{figure}

\begin{algorithm}[ht!] 
\caption{Composite Control Framework}
\label{alg:ETM-ESO-ADP}
\begin{algorithmic}[1]

\STATE \textbf{Initialization}
\STATE Set critic and actor weights $W_c(0)$ and $W_a(0)$
\STATE Initialize \gls{ESO} states $z(0)$ in \eqref{equ:ESO}
\STATE Set initial triggering instant $\tau_0 = 0$ and store $x(\tau_0)$ and $u(\tau_0)$
\STATE Generate extrapolation set $\mathcal{X}_E$ for computing \gls{EBE}
\STATE \vspace{0.10cm}
\STATE Online Control Loop
\STATE At each time $t$:

\STATE \quad Update \gls{ESO} via \eqref{equ:ESO} to obtain $\hat{x}(t)$ and $\hat{x}_{n+1}(t)$
\STATE \quad Compute triggering error using \eqref{equ:et_error}
\STATE \quad Evaluate threshold $\delta(t)$ via \eqref{equ:threshold}
\STATE \quad Check triggering condition \eqref{equ:et_condition}

\STATE \quad If the triggering condition is satisfied then
\STATE \qquad Compute \gls{IBE} $\varepsilon_t$ using \eqref{equ:IBE}.
\STATE \qquad Compute \gls{EBE} $\varepsilon_i$ using \eqref{equ:EBE}
\STATE \qquad Update critic weights $W_v$ via \eqref{equ:critic_update}
\STATE \qquad Update actor weights $W_a$ via \eqref{equ:actor_update}
\STATE \qquad Compute new control input $u$ using \eqref{equ:u_update}
\STATE \qquad Set $\tau_{k+1}=t$ and store $\hat{x}(\tau_{k+1})$ and $u(\tau_{k+1})$

\STATE \quad Else

\STATE \qquad Hold $u(t)=u(\tau_k)$ via \gls{ZOH}.
\STATE \quad End if

\STATE \quad Apply $u(t)$ to the plant dynamics \eqref{equ:non_linear_system}.


\end{algorithmic}
\end{algorithm}

\begin{assumption}
Under the proposed \gls{ETM}, a \gls{MIET} $\tau_{\min}>0$ is enforced such that $\tau_{k+1}-\tau_k \ge \tau_{\min}, \forall k\in\mathbb{N}, $where $\tau_k$ denotes the sequence of triggering instants.
This condition prevents Zeno behavior and guarantees that the count of triggering events in any finite time interval remains finite.
\end{assumption}

\begin{theorem}\label{assumption4}
The composite control architecture includes the uncertain nonlinear system \eqref{equ:non_linear_system}, the \gls{ESO} \eqref{equ:ESO}, 
the \gls{ADP}-based controller \eqref{equ:u_update}, and the proposed \gls{ETM}. 
Suppose that:
(i) Assumptions~1–4 hold;
(ii) the observer gain vector is chosen such that the \gls{ESO} error dynamics 
are globally asymptotically stable;
(iii) the uncertainty compensation renders the plant dynamics equivalent to 
the nominal model used in the \gls{ADP} design; and
(iv) the estimation errors of the actor–critic neural-network weights are 
uniformly ultimately bounded.
Then all closed-loop signals are uniformly ultimately bounded, and the state $x(t)$ converges to a small neighborhood of the origin.
\end{theorem}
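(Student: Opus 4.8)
The plan is to build a composite Lyapunov function and run a cascade-type argument in which the three interconnected subsystems -- the \gls{ESO} estimation error, the compensated plant under the approximate optimal policy, and the actor--critic weight errors -- are bounded in turn using hypotheses (ii)--(iv), and then reconciled across the event-triggered intervals so that the hold-induced perturbation is absorbed by the triggering threshold \eqref{equ:threshold}. Concretely, I would fix ideal weights $W_c^{*},W_a^{*}$ (the best approximations of $V$ and $u_0^{*}$ over the compact set $\Omega$) and introduce $\tilde W_c = W_c - W_c^{*}$, $\tilde W_a = W_a - W_a^{*}$; hypothesis (iv) then supplies constants $b_c,b_a$ with $\|\tilde W_c\|\le b_c$, $\|\tilde W_a\|\le b_a$ after a finite time, and in particular keeps $\|W_a(t)\|$ bounded above and away from zero so that $\delta(t)$ in \eqref{equ:threshold} stays well defined.

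First I would dispatch the observer and compensation layers. Since $E$ is Hurwitz, the $\epsilon$-scaled \gls{ESO} error admits a quadratic Lyapunov function via the associated Lyapunov equation; combined with Assumption~1 (boundedness of $\eta,\dot\eta$) and the smooth-saturation bound $|s(v)-\operatorname{sat}(v)|\le\varepsilon/2$, hypothesis (ii) gives that $\hat x - x$, and hence the mismatch between the saturated state $\bar x$ used by the controller and the true $x$, converges into an $O(\epsilon)$ neighborhood; call this residual $\chi(\epsilon)$. Under hypothesis (iii), substituting the composite law \eqref{equ:u_update} cancels the lumped term $x_{n+1}$, so the closed loop reduces to the nominal model $\dot x = Ax + B\bigl(f_0(x)+g_0(x)u_0\bigr)$ up to residuals of order $\chi(\epsilon)$ and the weight-error terms carried by $b_a,b_c$.

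Next I would take the optimal value function $V(x)$ associated with \eqref{equ:cost_function} as the state Lyapunov function. Differentiating along the reduced trajectory and eliminating the drift via the \gls{HJB} identity yields $\dot V = -Q(x) - (u_0-u_0^{*})^{\!T} R (u_0-u_0^{*}) + \Delta$, where $\Delta$ collects the actor approximation error (bounded through $b_a$), the \gls{ESO} residual (bounded through $\chi(\epsilon)$), and the effect of freezing the policy on $[\tau_k,\tau_{k+1})$. For the last contribution, the frozen input $u_0(\bar x(\tau_k),W_a)$ deviates from $u_0(\bar x(t),W_a)$ by a quantity bounded, via the gain bound $g_{\max}$ and the actor Lipschitz constant $L_a$, by a multiple of $g_{\max}L_a\|W_a\|\,\|e(t)\|$. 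This is exactly the term the threshold is engineered to dominate: whenever \eqref{equ:et_condition} is not met, i.e. $\|e\|^2\le\delta(t)^2$, the definition \eqref{equ:threshold} gives $g_{\max}^2 L_a^2 \|W_a\|^2 \|e\|^2 \le \beta\,\lambda_{\min}(Q)\,\|x\|^2$, so the hold-induced term is absorbed into a fraction $\beta$ of $-Q(x)$. Consequently $\dot V \le -(1-\beta)\lambda_{\min}(Q)\|x\|^2 + C$, with $C=C(b_a,b_c,\chi(\epsilon))$, which forces $\dot V<0$ outside a ball of radius $\sqrt{C/((1-\beta)\lambda_{\min}(Q))}$ and establishes that $x(t)$ is \gls{UUB} with a residual set that shrinks as $\epsilon\to0$ and as the weight errors decay. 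Assumptions~2--3 then propagate boundedness to $z$ and all remaining signals, while Assumption~4 (a positive \gls{MIET}) rules out Zeno behavior so that the scheme is well posed.

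The main obstacle is precisely the event-triggered step above: showing that $\delta(t)$ dominates the hold-induced control perturbation \emph{uniformly} over each inter-event interval while the \gls{ESO}-residual and weight-error cross terms in $\Delta$ remain small enough not to erode the $-(1-\beta)\lambda_{\min}(Q)\|x\|^2$ margin. The delicacy is that $\delta(t)$ depends on the time-varying $\|W_a(t)\|$, so one must verify -- again leaning on hypothesis (iv) -- that $\|W_a\|$ stays in a fixed positive range for the threshold and the \gls{MIET} of Assumption~4 to remain mutually consistent; carefully closing this loop of interdependencies between the weight bounds, the triggering threshold, and the non-Zeno guarantee is where the bulk of the technical effort lies.
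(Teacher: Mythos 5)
Your plan follows essentially the same composite-Lyapunov route as the paper --- a candidate built from the optimal value function plus quadratic terms in the critic and actor weight errors, a cascade that disposes of the \gls{ESO} error via the Hurwitz matrix $E$ and of the lumped uncertainty via hypothesis (iii), and the \gls{MIET} of Assumption~4 to exclude Zeno behavior --- so there is no disagreement in strategy. The substantive difference is that you go further than the paper does. The paper's proof is explicitly a sketch: it asserts $\dot V \le -\alpha\|x\|^2 + \mathcal{O}(\varepsilon)$ on each inter-event interval with the policy and weights frozen, asserts non-increase of $V$ at triggering instants, and then concedes in the subsequent remark that the triggering error $e(t)$ is \emph{not} included in the Lyapunov function, deferring the ISS-type inequality $\dot V \le -k_1\|x\|^2 + k_2\|e\|^2$ with $k_2 k_0^2 < k_1$ to a future journal version. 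Your proposal supplies exactly that missing step: you show that whenever the trigger does not fire, the definition of $\delta(t)$ in \eqref{equ:threshold} gives $g_{\max}^2 L_a^2\|W_a\|^2\|e\|^2 \le \beta\,\lambda_{\min}(Q)\|x\|^2$, so the hold-induced perturbation is absorbed into a fraction $\beta$ of $-Q(x)$, yielding $\dot V \le -(1-\beta)\lambda_{\min}(Q)\|x\|^2 + C$. This is the standard and correct mechanism (it is the reason the threshold has the form it does), and your identification of the remaining delicacy --- that $\|W_a(t)\|$ must stay in a fixed positive range for the threshold and the \gls{MIET} to be mutually consistent, which you can only get from hypothesis (iv) --- is precisely the loop the paper has not closed. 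Two small cautions if you execute the plan: the hold-induced term enters $\dot V$ through $\nabla V^{*\,T} B g_0\,\bigl(u_0(\bar x(\tau_k))-u_0(\bar x(t))\bigr)$, so you will need a bound on $\|\nabla V^*\|$ over the compact operating set and a Young-inequality split before the squared-error cancellation applies; and the paper's threshold compares $e(t)=x(\tau_k)-x(t)$ while the controller actually acts on the saturated \gls{ESO} state $\bar x$, so the $\chi(\epsilon)$ residual must be tracked through the triggering condition as well, not only through the plant dynamics.
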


\begin{proof}
We provide a sketch of the argument. Consider the Lyapunov candidate
\begin{equation}\notag
V = V^{*}(x) + \frac{1}{2} (\Theta_v^{e})^{\mathrm{T}} \Gamma^{-1} \Theta_v^{e}
+ \frac{1}{2} (\Theta_c^{e})^{\mathrm{T}} \Theta_c^{e}.
\end{equation}
where \(V^{*}(x)\) is the optimal value function, and \(\Theta_v^{e}, \Theta_c^{e}\) denote the critic and actor weight errors, respectively.

On each inter-event interval \([\tau_k, \tau_{k+1})\), the \gls{ESO}, the control policy, and the actor–critic weights are fixed. Under the proposed control law, standard Lyapunov analysis yields
\begin{equation}\notag
\dot V \le -\alpha \|x\|^{2} + \mathcal{O}(\varepsilon),
\end{equation}
for some \(\alpha > 0\) and sufficiently small approximation and estimation errors, implying boundedness of all closed-loop variables on each inter-event interval. At each triggering instant, the \gls{ESO} correction and the actor–critic updates are designed so that \(V\) does not increase, which 
ensures \gls{UUB} of \((x, \Theta_v^{e}, \Theta_c^{e})\).

By Assumption~\ref{assumption4}, the inter-event times satisfy \(\tau_{k+1} - \tau_k \ge \tau_{\min} > 0\), which excludes Zeno behavior and guarantees that only finitely many events occur on any finite time interval.

Combining these properties shows that this composite control framework
system is uniformly ultimately bounded and that \(x(t)\) converges to a small 
neighborhood of the origin.
\end{proof}

\begin{remark}
The above Lyapunov analysis is carried out only on the continuous-time closed loop between triggering instants.
The event-triggering error $e(t)$ is not explicitly included in the Lyapunov function. As a result, the analysis provides practical stability within each inter-event interval, but does not constitute a full hybrid-system proof. A rigorous global stability proof would require augmenting the Lyapunov function with $e(t)$ and deriving an ISS-type inequality of the form

\begin{equation}
\dot V \le - k_1 \|x\|^2 + k_2 \|e\|^2,
\end{equation}

where $k_1>0$ is the nominal decay rate of the Lyapunov function, $k_2>0$ quantifies how the Lyapunov derivative is affected by the measurement error, with a triggering rule guaranteeing $\|e\| \le k_0 \|x\|$ and $k_2 k_0^2 < k_1$, while $k_0>0$ is the design parameter in the triggering condition that restricts the size of $e(t)$ relative to $x(t)$. A full stability analysis will be included in an extended journal version.

\end{remark}

\section{Simulation Results}
In what follows, two numerical case studies are conducted to evaluate the capability and robustness of the proposed composite control framework.
\subsection{Example 1}
To demonstrate the controller performance, we first consider a third-order uncertain nonlinear system. The real plant is given by
\begin{equation}
\left\{
\begin{aligned}
\dot z &= \underbrace{-\bigl(x_1^{2}+0.5\,\eta^{2}\bigr)\,z}_{f_z(x,z,\eta)}, \\[4pt]
\dot x_{1} &= x_{2}, \\[4pt]
\dot x_{2} &= 
\underbrace{-1.5\,x_{1}-x_{2}+1.5\,(x_{1}+x_{2})\bigl(\sin(x_{2})+2\bigr)^{2}}_{f_{0}(x)} \\[3pt]
&\quad + \underbrace{\bigl(-x_{2}+\eta+z^{2}\bigr)}_{\Delta f (x,z,\eta)} \\[3pt]
&\quad + \Bigl(
\underbrace{\cos(x_{1})+2}_{g_{0}(x)}
+\underbrace{\bigl(\sin(x_{2})-\eta\bigr)}_{\Delta g(x,z,\eta)}
\Bigr) u , \\[4pt]
y &= x_{1}.
\end{aligned}
\right.
\end{equation}


In this example, we construct the \gls{BE} over a uniformly discretized exploration set
\(\mathcal{X}_E\), defined as 
$
\mathcal{X}_E
=
[-2,\,2]_{0.5}
\;\times\;
[-2,\,2]_{0.5}
$
.The \gls{ESO} parameters are selected as follows. The observer gain vector is set to $
L = \begin{bmatrix}2 & 2 & 1\end{bmatrix}^{\mathrm{T}} $. The small positive constant is set to $\varepsilon = 0.03$ to accelerate \gls{ESO} convergence while maintaining robustness against measurement noise.

\noindent The saturation bounds for the \gls{ESO} outputs are chosen as $M_1 = M_2 = M_3 = 3,$. The nominal-function saturation limits are selected as $M_f = 7, M_g = 3$, ensuring boundedness of the \gls{ESO}-based control law.



\subsubsection{Performance Analysis}

Fig.~\ref{fig:state} demonstrates that the \gls{ESO} performs effectively for the considered nonlinear system. The observer-generated state trajectories closely track the actual system states, confirming the accuracy and reliability of the proposed observer.

\begin{figure}[ht!]
\centering
\includegraphics[width=0.45\textwidth]{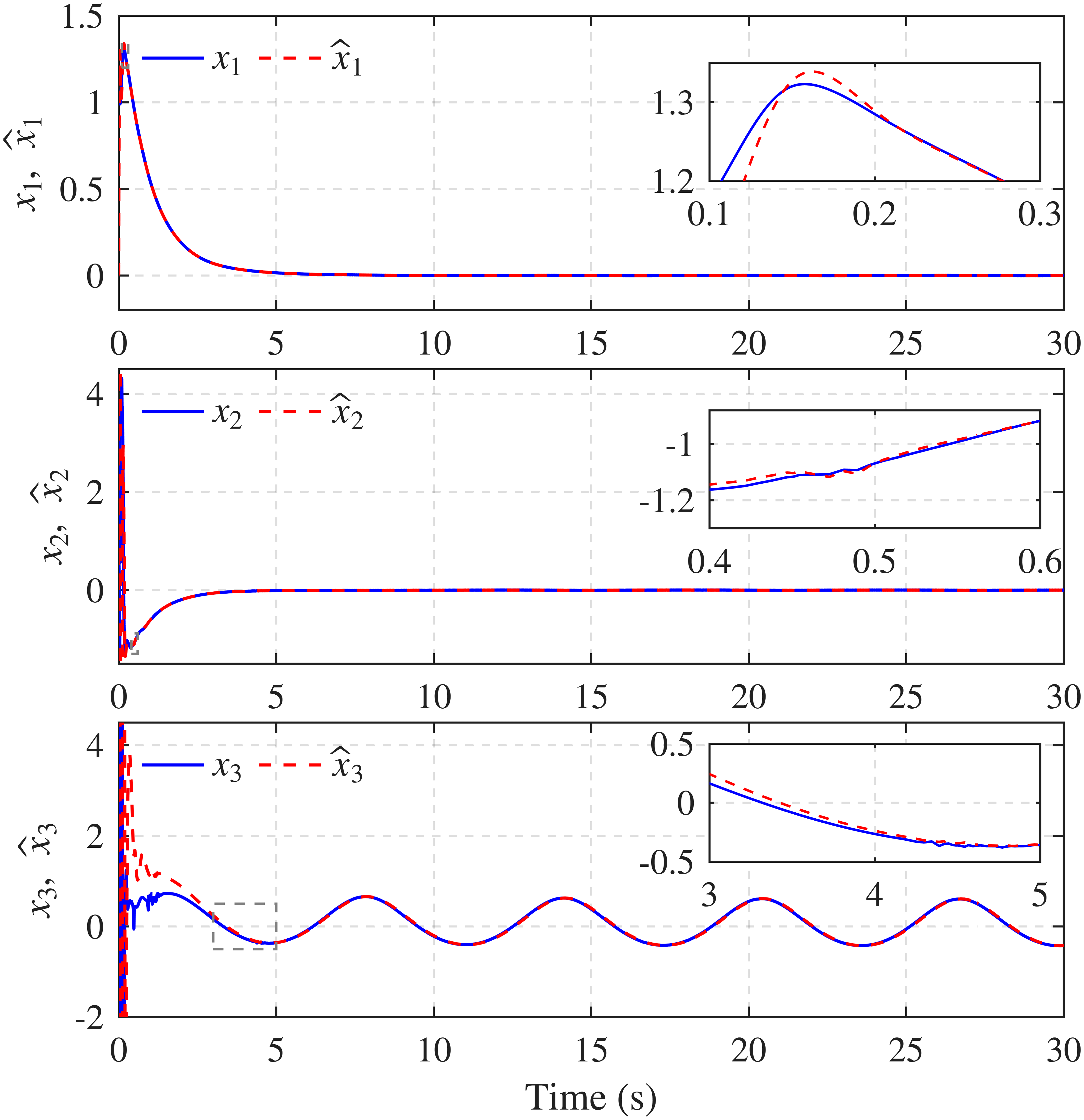}
\caption{System state trajectories and their \gls{ESO} estimates.}
\label{fig:state}
\end{figure}

Fig.~\ref{fig:ET_TT_states} shows that the control input $u$ under the proposed framework exhibits significant activity during the initial transient period due to uncertainties. After this short transient, the control input settles quickly and stays close to zero, maintaining stable behavior, indicating that the proposed control strategy achieves fast transient response and effective disturbance rejection.

To quantify the computational efficiency of the proposed \gls{ETM}, we define the update saving ratio as $ \eta = \frac{N_{\text{skipped}}}{N_{\text{total}}} \times 100\%, $where $N_{\text{skipped}}$ denotes the number of skipped control updates due to the triggering condition not being satisfied and $N_{\text{total}} = N_{\text{skipped}} + N_{\text{updated}}$ represents the total number of potential update instants. 
Based on the simulation study, an update-saving rate of $72\%$ is obtained, demonstrating a significant reduction in computational load while maintaining system stability.

\begin{figure}[ht!]
\centering
\includegraphics[width=0.48\textwidth]{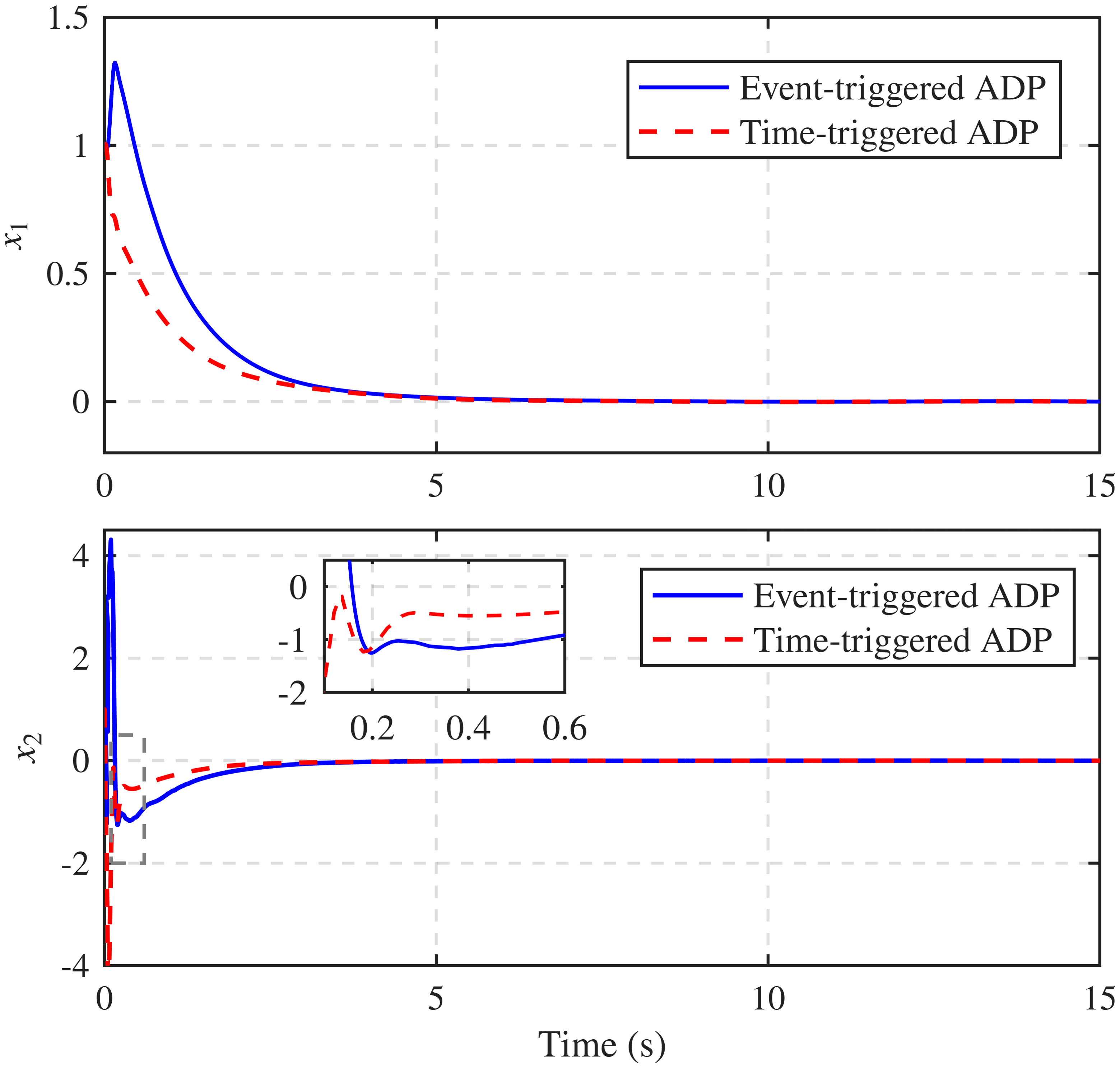}
\caption{State trajectories for composite control framework in Example 1}
\label{fig:ET_TT_states}
\end{figure}

As shown in Fig.~\ref{fig:ET_TT_states}, both $x_1$ and $x_2$ remain stable and converge to the equilibrium.

\begin{figure}[ht!]
\centering
\includegraphics[width=0.48\textwidth]{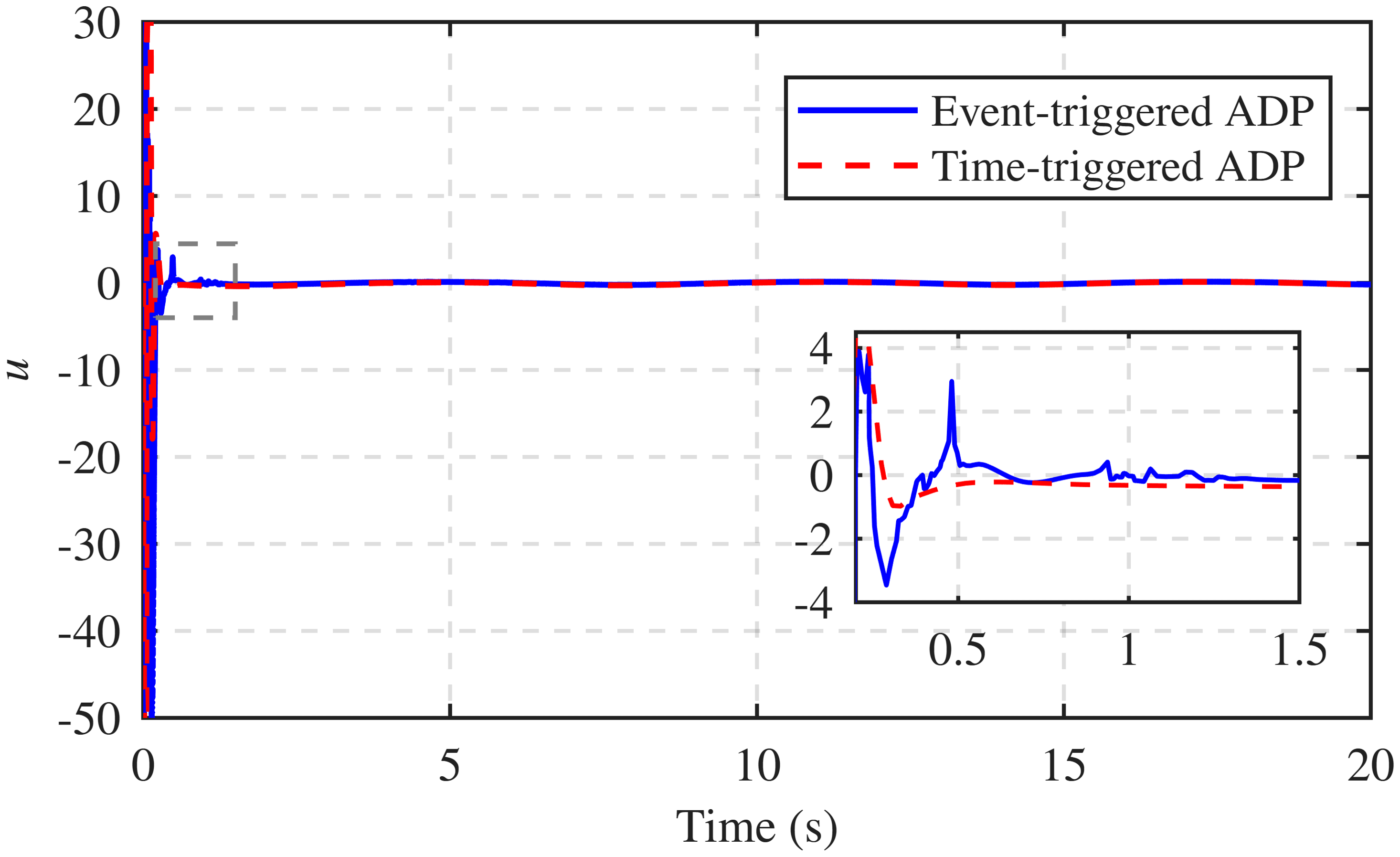}
\caption{Control signal composite control framework compared with those generated by the periodic \gls{ADP} strategy.}
\label{fig:ET_TT_u}
\end{figure}

As illustrated in Fig.~\ref{fig:ET_TT_u}, the control input produced by the composite controller stays well behaved and within bounds, even when roughly $71\%$ of its possible update instances are omitted. In contrast to the periodically updated \gls{ADP} (red dashed), \gls{ETM} greatly reduces the number of control updates while maintaining comparable transient response and steady-state performance.


Fig. \ref{fig:ET_dis} shows the distribution of triggering instants over the simulation horizon. Each cross mark represents an execution of the actor–critic update when the event-triggering condition is satisfied.
\begin{figure}[t]
\centering
\includegraphics[width=0.48\textwidth]{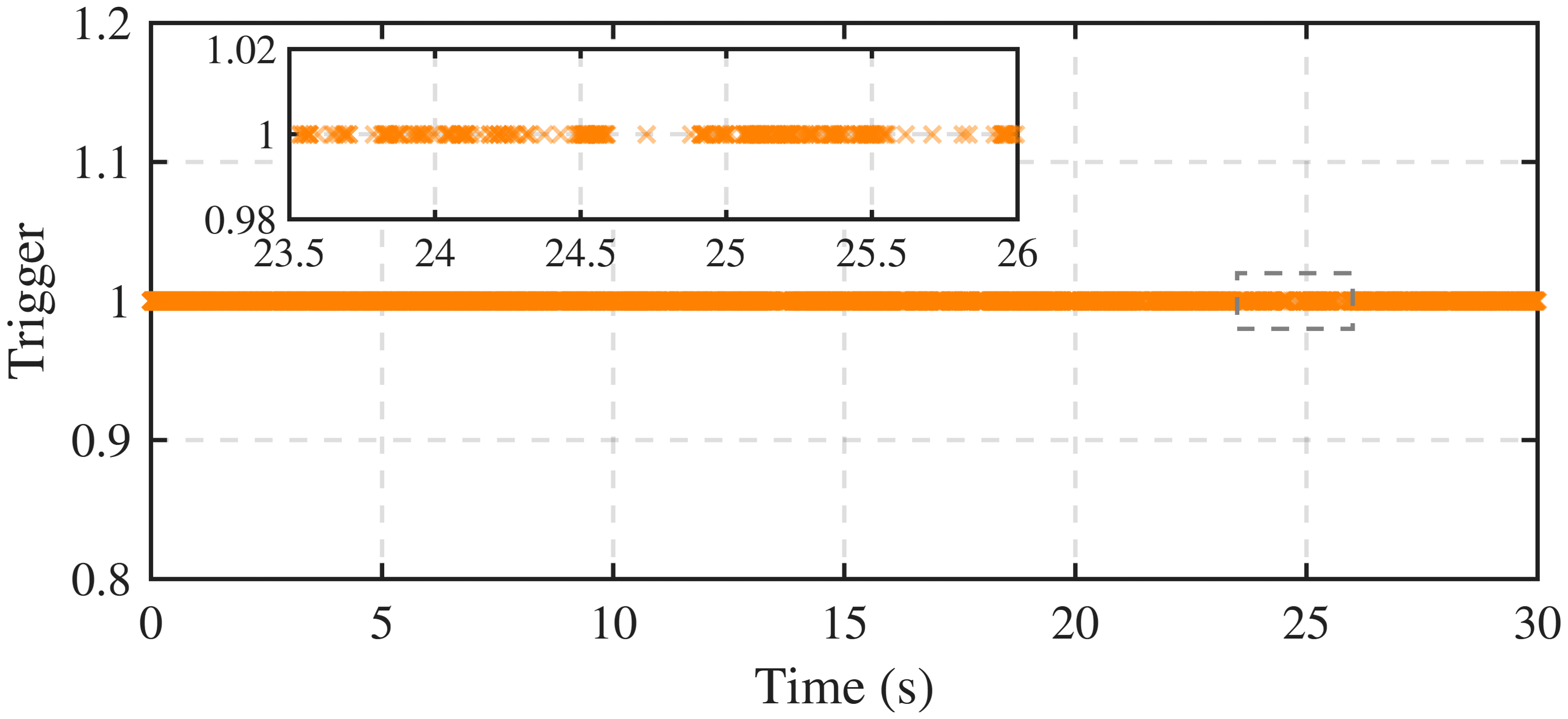}
\caption{Event trigger distribution}
\label{fig:ET_dis}
\end{figure}

\subsection{Example 2}

In this example, the proposed control method is implemented on an inverted pendulum system subject to both internal and external nonlinear disturbances. 
The system dynamics are formulated as follows:
\begin{equation}
\left\{
\begin{aligned}
\dot z &= 
\underbrace{0.5\,\eta(t)\,z}_{f_z(z,\eta)},
\\[6pt]
\dot x_1 &=
\underbrace{x_2}_{f_{x_1}(x_2)},
\\[6pt]
\dot x_2 &= 
\underbrace{-\frac{g}{l}\sin(x_1) - \frac{b}{m l^{2}}x_2}_{f_0(x_1,x_2)}
\\[4pt]
&\quad+
\underbrace{5e^{-0.3t} + 0.5\sin(x_1) + 0.5 z}_{\Delta f(x_1,z,t)}
+
\underbrace{\frac{1}{m l^{2}}}_{g_0}\,u,
\\[6pt]
y &= x_1.
\end{aligned}
\right.
\label{eq:inverted_pendulum_dynamics}
\end{equation}


In this example, we construct the \gls{BE} over a predefined rectangular exploration set: $
\mathcal{X}_E
=
[-2,\,2]_{0.5}
\times
[-5,\,5]_{1.0}.
$

\noindent The observer gain is selected as $L = \begin{bmatrix} 2 & 2 & 1 \end{bmatrix}^{\mathrm{T}}, $ The small positive constant used in the \gls{ESO} update is set to $\varepsilon = 0.03$. The saturation bounds for the \gls{ESO} states are configured as $M_1 = 1, M_2 = 1, M_3 = 3$, and the saturation bounds for the nominal functions are selected as $M_f = M_g = 7$. 

\noindent The nonlinear pendulum employed in the simulation is characterized by the parameters $ m=0.8 \mathrm{~kg}, l=1.2 \mathrm{~m}, b=0.2, g=9.81 \mathrm{~m} / \mathrm{s}^2. $ Here $m$ denotes the pendulum mass, $l$ the rod length, $b$ the viscous friction coefficient, and $g$ is the gravitational acceleration.

\subsubsection{Performance Analysis}

Fig.~\ref{fig:state_pole} shows that the \gls{ESO} reconstructs all system states, including the aggregated disturbance term, with high accuracy. Even during fast transients and highly nonlinear phases, the estimated trajectories $\hat{x}_i$ remain tightly aligned with the true states, showing only minimal deviation. The inset plots further demonstrate that the \gls{ESO} can track fast dynamics and suppress disturbances with rapid convergence.

\begin{figure}[t]
\centering
\includegraphics[width=0.45\textwidth]{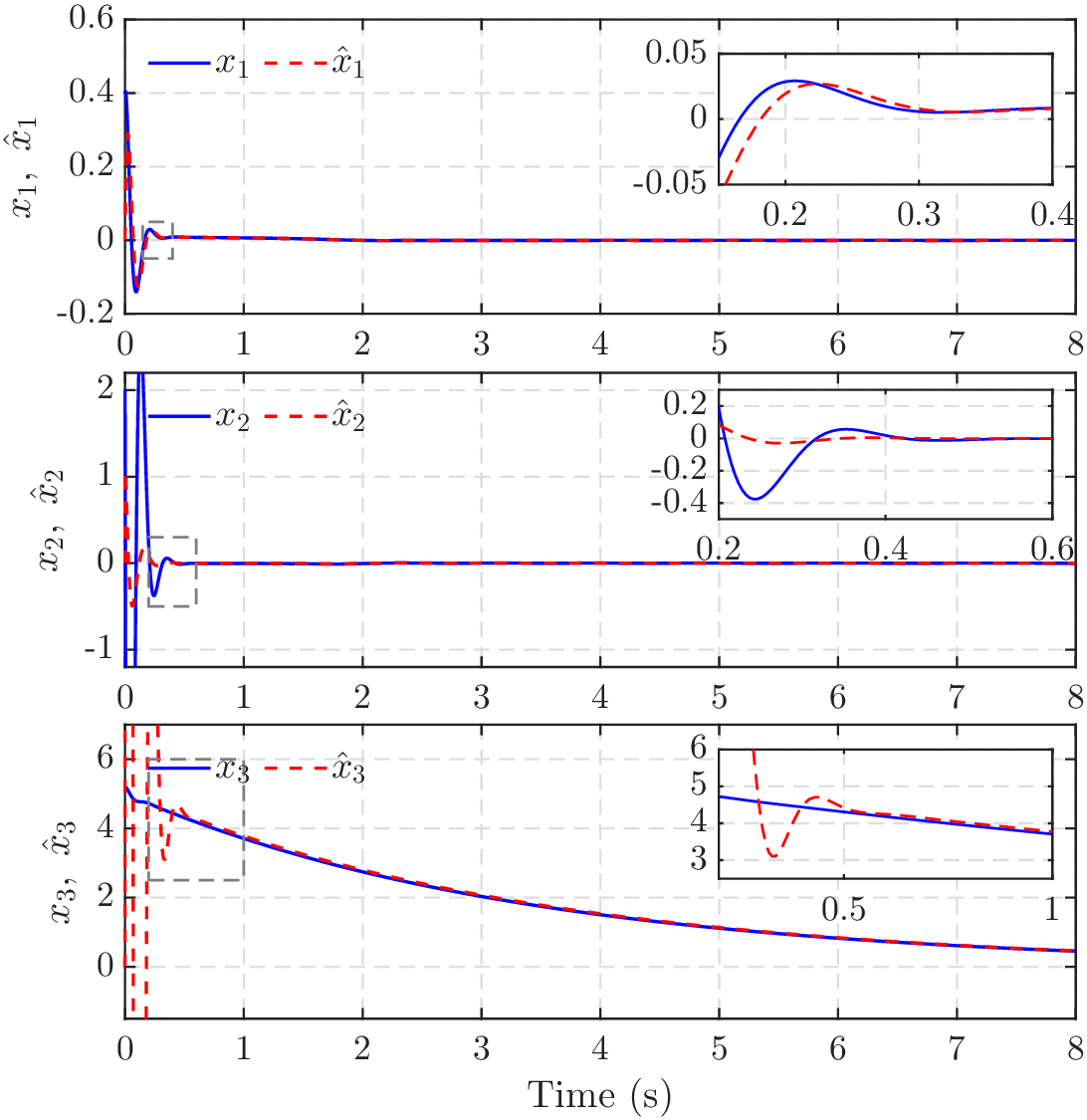}
\caption{System state trajectories and their \gls{ESO} estimates.}
\label{fig:state_pole}
\end{figure}


From a theoretical perspective, Fig.~\ref{fig:ET_TT_states_pole} illustrates the main benefit of the proposed composite control framework over its time-triggered counterpart. \gls{ETM} updates only when the state deviation crosses a prescribed threshold, allowing the learning and control actions to respond only to relevant changes in the dynamics. As a result, the \gls{ET}-\gls{ADP} achieves faster and more structured convergence, most notably in the $x_2$ response, which uses control and learning updates more efficiently than the uniformly sampled, time-triggered scheme. In this example, our mechanism achieved a $56\%$ reduction in computational cost.

\begin{figure}[t]
\centering
\includegraphics[width=0.48\textwidth]{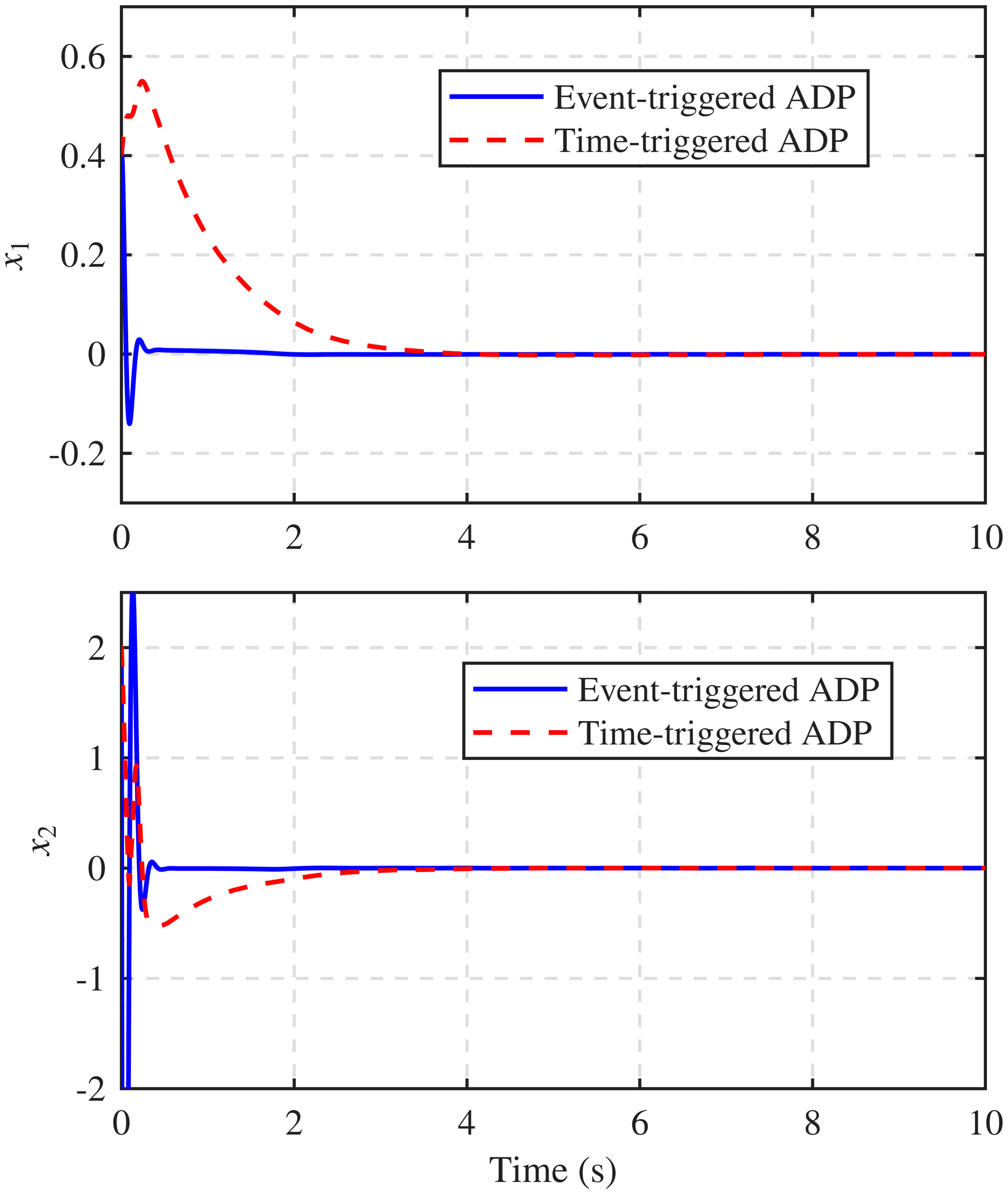}
\caption{Evolution of the system states from composite control framework and standard \gls{ADP} method}
\label{fig:ET_TT_states_pole}
\end{figure}

As depicted in Fig.~\ref{fig:ET_TT_u_pole}, the proposed framework controller delivers a quick corrective action during the initial transient phase, which is expected for stabilizing the nonlinear pendulum. After the system reaches steady state, the control input rapidly decays and remains near zero, reflecting stable closed-loop behavior and low steady-state effort.

Fig.~\ref{fig:ET_TT_u_pole} further shows that the \gls{ET}-\gls{ADP} controller exhibits smaller amplitude variations and a faster convergence compared with the conventional time-triggered \gls{ADP}. The time-triggered controller, by contrast, produces larger alternating control swings and wide input excursions, which are consistent with the overshoot and oscillatory behavior observed in Fig.~\ref{fig:ET_TT_states_pole}. These results highlight the benefits of \gls{ETM}: by updating only when necessary, it avoids excessive corrective actions, suppresses destabilizing oscillations, and promotes a more stable, energy-efficient control behavior.

\begin{figure}[H]
\centering
\includegraphics[width=0.48\textwidth]{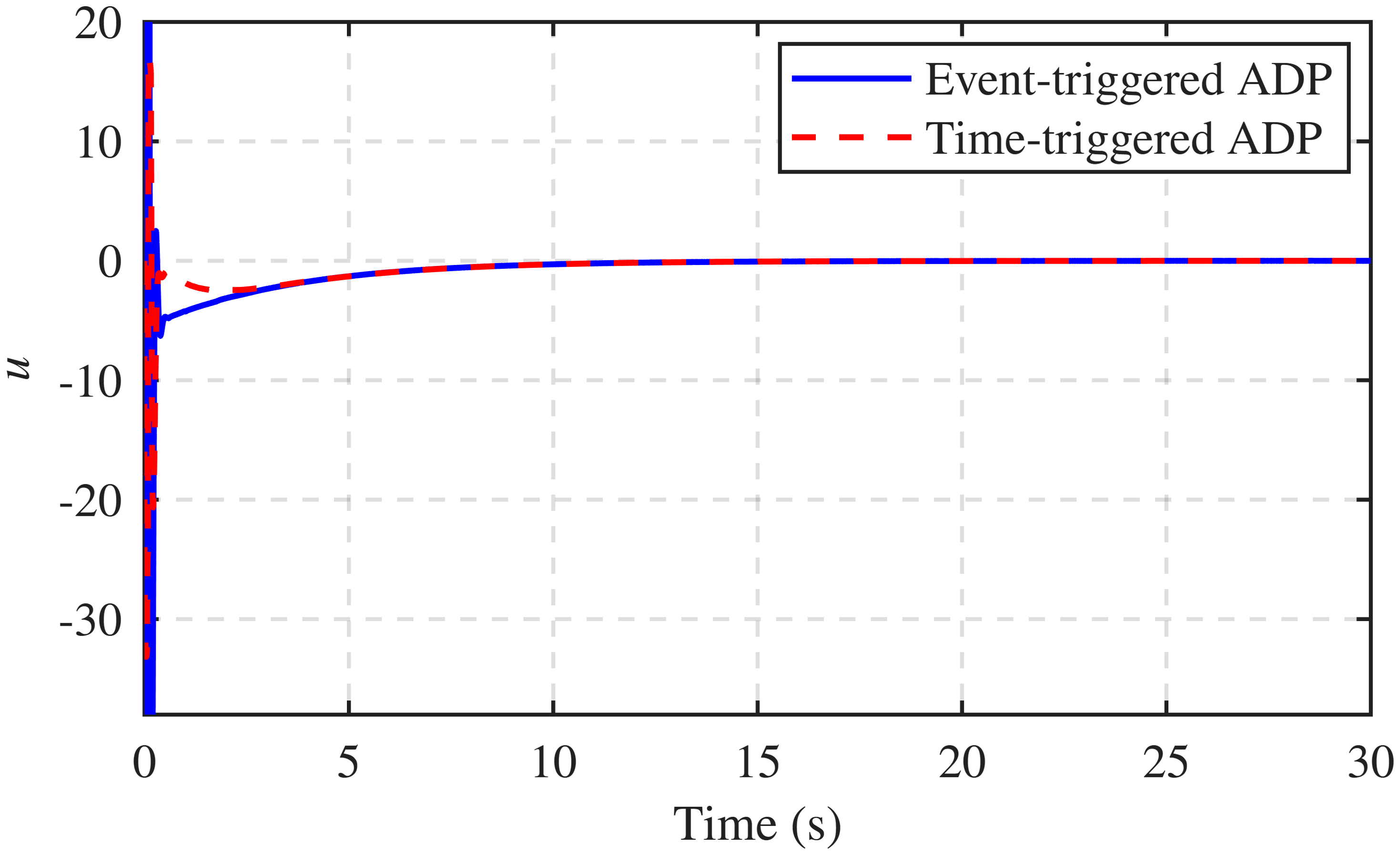}
\caption{Control signals generated by the proposed composite control framework, compared with those obtained from the periodic \gls{ADP} strategy.}
\label{fig:ET_TT_u_pole}
\end{figure}

\begin{figure}[ht!]
\centering
\includegraphics[width=0.48\textwidth]{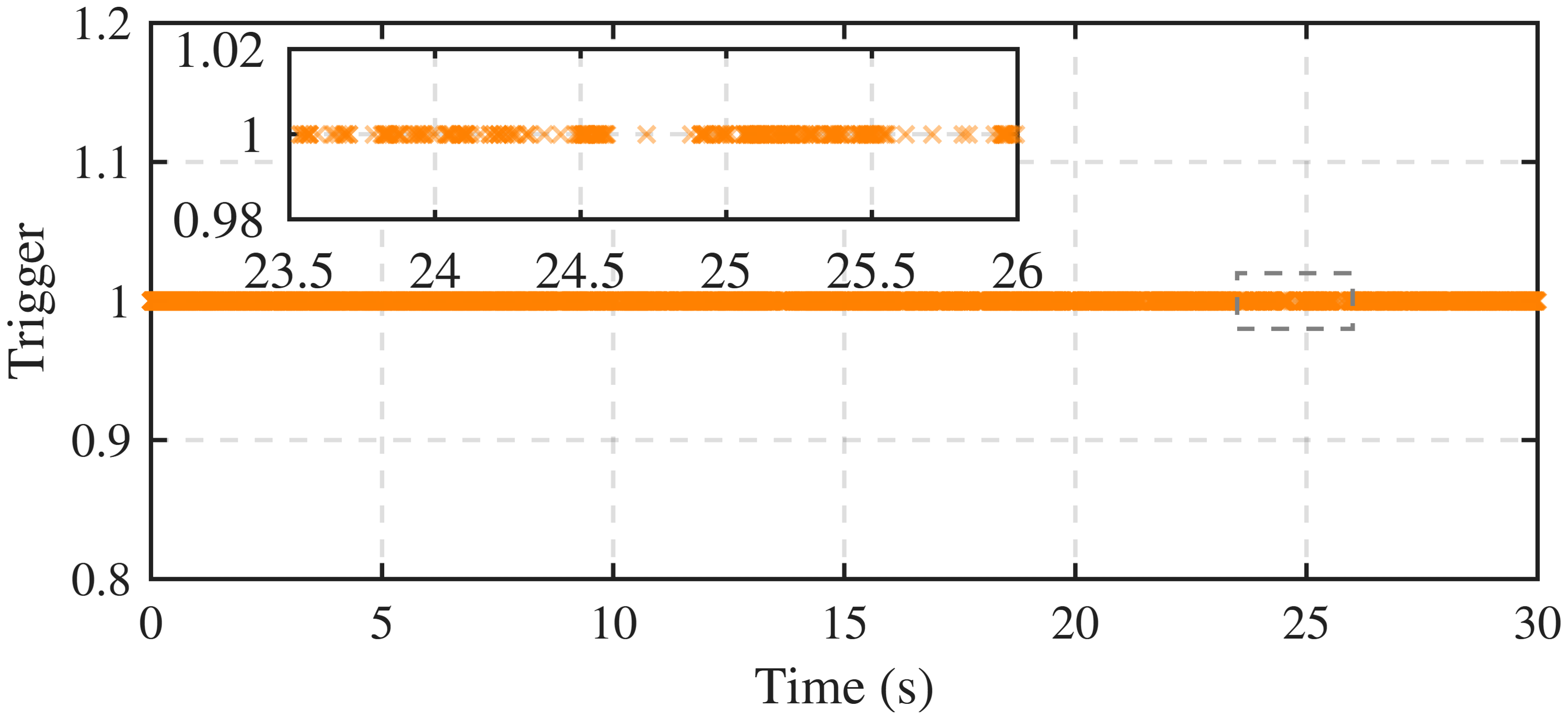}
\caption{Event trigger distribution}
\label{fig:ET_dis_2}
\end{figure}
Fig. \ref{fig:ET_dis_2} depicts the timing of the triggering events during the simulation. A cross indicates an instance where the event-triggering condition prompts an actor–critic update.

\section{Conclusion}
An \gls{ESO}-assisted \gls{ADP} architecture with an \gls{ETM} has been presented for uncertain nonlinear systems. In this framework, the \gls{ESO}-based compensation scheme provides real-time estimation and removal of lumped uncertainties, while the augmented state formulation embeds the tracking error and system dynamics into the optimal control design. The \gls{ADP} controller is then employed to approximate the optimal policy of the compensated subsystem through online learning. Simulation studies verify the framework's capability, showing that the controller maintains strong resistance to disturbances despite reduced update frequency enabled by the \gls{ETM}. The developed methodology will be further expanded to handle multi-input-multi-output configurations in future investigations. Also, a complete hybrid-system stability proof that explicitly incorporates the triggering error will be developed in the extended journal version.

\bibliography{adp_et}

@article{chenDisturbanceObserverBasedControlRelated2016,
  title = {Disturbance-{{Observer-Based Control}} and {{Related Methods}}---{{An Overview}}},
  author = {Chen, WenHua and Yang, Jun and Guo, Lei and Li, Shihua},
  year = 2016,
  journal = {IEEE Transactions on Industrial Electronics},
  volume = {63},
  number = {2},
  pages = {1083--1095}
}

@article{dongEventTriggeredAdaptiveDynamic2017b,
  title = {Event-{{Triggered Adaptive Dynamic Programming}} for {{Continuous-Time Systems With Control Constraints}}},
  author = {Dong, Lu and Zhong, Xiangnan and Sun, Changyin and He, Haibo},
  year = 2017,
  journal = {IEEE Transactions on Neural Networks and Learning Systems},
  volume = {28},
  number = {8},
  pages = {1941--1952}
}

@article{hanEventtriggeredbasedOnlineIntegral2024a,
  title = {Event-Triggered-Based Online Integral Reinforcement Learning for Optimal Control of Unknown Constrained Nonlinear Systems},
  author = {Han, Xiumei and Zhao, Xudong and Wang, Ding and Wang, Bohui},
  year = 2024,
  journal = {International Journal of Control},
  volume = {97},
  number = {2},
  pages = {213--225},
  publisher = {Taylor \& Francis}
}

@article{hanPIDActiveDisturbance2009,
  title = {From {{PID}} to {{Active Disturbance Rejection Control}}},
  author = {Han, Jingqing},
  year = 2009,
  journal = {IEEE Transactions on Industrial Electronics},
  volume = {56},
  number = {3},
  pages = {900--906}
}

@inproceedings{heemelsIntroductionEventtriggeredSelftriggered2012,
  title = {An Introduction to Event-Triggered and Self-Triggered Control},
  booktitle = {2012 {{IEEE}} 51st {{IEEE Conference}} on {{Decision}} and {{Control}} ({{CDC}})},
  author = {Heemels, WPMH. and Johansson, KH and Tabuada, P},
  year = 2012,
  pages = {3270--3285},
  eventtitle = {2012 {{IEEE}} 51st {{IEEE Conference}} on {{Decision}} and {{Control}} ({{CDC}})}
}

@article{huHierarchicalTestingRabbit2025,
  title = {Hierarchical {{Testing With Rabbit Optimization}} for {{Industrial Cyber-Physical Systems}}},
  author = {Hu, Jinwei and Tang, Zezhi and Jin, Xin and Zhang, Benyuan and Dong, Yi and Huang, Xiaowei},
  year = 2025,
  journal = {IEEE Transactions on Industrial Cyber-Physical Systems},
  volume = {3},
  pages = {472--484}
}

@article{kamalapurkarConcurrentLearningbasedApproximate2016,
  title = {Concurrent Learning-Based Approximate Optimal Regulation},
  author = {Kamalapurkar, Rushikesh and Walters, Patrick and Dixon, Warren},
  year = 2016,
  journal = {Automatica},
  volume = {64},
  primaryclass = {eess},
  pages = {94--104},
  archiveprefix = {arXiv}
}

@article{lewisReinforcementLearningAdaptive2009,
  title = {Reinforcement Learning and Adaptive Dynamic Programming for Feedback Control},
  author = {Lewis, Frank and Vrabie, Draguna},
  year = 2009,
  journal = {IEEE Circuits and Systems Magazine},
  volume = {9},
  number = {3},
  pages = {32--50}
}

@book{luDeterminationSurfaceCrack2020,
  title = {Determination of {{Surface Crack Orientation Based}} on {{Thin-Skin Regime Using Triple-Coil Drive-Pickup Eddy-Current Sensor}}},
  author = {Lu, Mingyang and Meng, Xiaobai and Huang, Ruochen and Chen, Liming and Tang, Zezhi and Li, Junshi and Peyton, A and Yin, Wuliang},
  year = 2020,
  langid = {american}
}

@inproceedings{luoAdaptiveAffineFormation2020,
  title = {Adaptive {{Affine Formation Maneuver Control}} of {{Second-Order Multi-Agent Systems}} with {{Disturbances}}},
  booktitle = {2020 16th {{International Conference}} on {{Control}}, {{Automation}}, {{Robotics}} and {{Vision}} ({{ICARCV}})},
  author = {Luo, Zhengxin and Zhang, Pengyu and Ding, Xiangjun and Tang, Zezhi and Wang, Chunyan and Wang, Jianan},
  year = 2020,
  pages = {1071--1076},
  eventtitle = {2020 16th {{International Conference}} on {{Control}}, {{Automation}}, {{Robotics}} and {{Vision}} ({{ICARCV}})}
}

@misc{onuohaDiscreteTimeStressMatrixBased2024,
  title = {Discrete-{{Time Stress Matrix-Based Formation Control}} of {{General Linear Multi-Agent Systems}}},
  author = {Onuoha, Okechi and Kurawa, Suleiman and Tang, Zezhi and Dong, Yi},
  year = 2024,
  langid = {english}
}

@inproceedings{onuohaStressMatrixBasedFormation2024,
  title = {Stress {{Matrix-Based Formation Control}} of {{Multi-Agent Systems}} with {{Discrete-Time Communication}}},
  booktitle = {2024 12th {{International Conference}} on {{Systems}} and {{Control}} ({{ICSC}})},
  author = {Onuoha, Okechi and Ubochi, Brendan Chijioke and Kurawa, Suleiman and Tang, Zezhi and Wu, Changshun and Dong, Yi},
  year = 2024,
  pages = {173--177},
  eventtitle = {2024 12th {{International Conference}} on {{Systems}} and {{Control}} ({{ICSC}})}
}

@article{puClassAdaptiveExtended2015,
  title = {A {{Class}} of {{Adaptive Extended State Observers}} for {{Nonlinear Disturbed Systems}}},
  author = {Pu, Zhiqiang and Yuan, Ruyi and Yi, Jianqiang and Tan, Xiangmin},
  year = 2015,
  journal = {IEEE Transactions on Industrial Electronics},
  volume = {62},
  number = {9},
  pages = {5858--5869}
}

@article{qinRealTimeRemainingUseful2023,
  title = {Real-{{Time Remaining Useful Life Prediction}} of {{Cutting Tools Using Sparse Augmented Lagrangian Analysis}} and {{Gaussian Process Regression}}},
  author = {Qin, Xiao and Huang, Weizhi and Wang, Xuefei and Tang, Zezhi and Liu, Zepeng},
  year = 2023,
  journal = {Sensors},
  volume = {23},
  number = {1},
  pages = {413},
  publisher = {Multidisciplinary Digital Publishing Institute},
  langid = {english}
}

@article{ranNewExtendedState2021c,
  title = {A New Extended State Observer for Uncertain Nonlinear Systems},
  author = {Ran, Maopeng and Li, Juncheng and Xie, Lihua},
  year = 2021,
  journal = {Automatica},
  volume = {131},
  pages = {109772}
}

@article{ranReinforcementLearningBasedDisturbanceRejection2022,
  title = {Reinforcement-{{Learning-Based Disturbance Rejection Control}} for {{Uncertain Nonlinear Systems}}},
  author = {Ran, Maopeng and Li, Juncheng and Xie, Lihua},
  year = 2022,
  journal = {IEEE Transactions on Cybernetics},
  volume = {52},
  number = {9},
  pages = {9621--9633}
}

@article{tabuadaEventTriggeredRealTimeScheduling2007,
  title = {Event-{{Triggered Real-Time Scheduling}} of {{Stabilizing Control Tasks}}},
  author = {Tabuada, Paulo},
  year = 2007,
  journal = {IEEE Transactions on Automatic Control},
  volume = {52},
  number = {9},
  pages = {1680--1685}
}

@inproceedings{tangDisturbanceObserverBasedOptimal2024,
  title = {Disturbance {{Observer-Based Optimal Tracking Control}} for {{Slot Coating Process}} with {{Mismatched Input Disturbances}}},
  booktitle = {2024 {{UKACC}} 14th {{International Conference}} on {{Control}} ({{CONTROL}})},
  author = {Tang, Zezhi and Passmore, Christopher and Rossiter, Anthony and Ebbens, Stephen and Dunderdale, Gary and Panoutsos, George},
  year = 2024,
  pages = {55--56},
  eventtitle = {2024 {{UKACC}} 14th {{International Conference}} on {{Control}} ({{CONTROL}})}
}

@article{tangDisturbanceRejectionIterative2019,
  title = {Disturbance Rejection via Iterative Learning Control with a Disturbance Observer for Active Magnetic Bearing Systems},
  author = {Tang, Zezhi and Yu, Yuanjin and Li, Zhenhong and Ding, Zhengtao},
  year = 2019,
  journal = {Frontiers of Information Technology \& Electronic Engineering},
  volume = {20},
  number = {1},
  pages = {131--140},
  langid = {english}
}

@inproceedings{tangOutputTrackingUncertain2024,
  title = {Output {{Tracking}} for {{Uncertain Time-Delay Systems}} via {{Robust Reinforcement Learning Control}}},
  booktitle = {2024 43rd {{Chinese Control Conference}} ({{CCC}})},
  author = {Tang, Zezhi and Rossiter, Anthony and Jin, Xin and Zhang, Benyuan and Panoutsos, George},
  year = 2024,
  pages = {2219--2226},
  eventtitle = {2024 43rd {{Chinese Control Conference}} ({{CCC}})}
}

@inproceedings{tangReinforcementLearningBasedApproach2024,
  title = {A {{Reinforcement Learning-Based Approach}} for {{Optimal Output Tracking}} in {{Uncertain Nonlinear Systems}} with {{Mismatched Disturbances}}},
  booktitle = {2024 {{UKACC}} 14th {{International Conference}} on {{Control}} ({{CONTROL}})},
  author = {Tang, Zezhi and Rossiter, Anthony and Panoutsos, George},
  year = 2024,
  pages = {169--174},
  eventtitle = {2024 {{UKACC}} 14th {{International Conference}} on {{Control}} ({{CONTROL}})}
}

@inproceedings{tangReinforcementLearningBasedOutput2024,
  title = {Reinforcement {{Learning-Based Output Stabilization Control}} for {{Nonlinear Systems With Generalized Disturbances}}},
  booktitle = {2024 {{IEEE International Conference}} on {{Industrial Technology}} ({{ICIT}})},
  author = {Tang, Zezhi and Rossiter, Anthony and Dong, Yi and Panoutsos, George},
  year = 2024,
  pages = {1--6},
  eventtitle = {2024 {{IEEE International Conference}} on {{Industrial Technology}} ({{ICIT}})}
}

@misc{tangTemporalScaleTransformer2025,
  title = {A Temporal Scale Transformer Framework for Precise Remaining Useful Life Prediction in Fuel Cells},
  author = {Tang, Zezhi and Chen, Xiaoyu and Jin, Xin and Zhang, Benyuan and Liang, Wenyu},
  year = 2025,
  primaryclass = {cs},
  publisher = {arXiv},
  archiveprefix = {arXiv}
}

@inproceedings{tangUnmatchedDisturbanceRejection2016,
  title = {Unmatched Disturbance Rejection for {{AMB}} Systems via {{DOBC}} Approach},
  booktitle = {2016 35th {{Chinese Control Conference}} ({{CCC}})},
  author = {Tang, Zezhi and Wang, Chunyan and Ding, Zhengtao},
  year = 2016,
  pages = {5931--5935},
  eventtitle = {2016 35th {{Chinese Control Conference}} ({{CCC}})}
}

@article{xueAdaptiveDynamicProgramming2020,
  title = {Adaptive Dynamic Programming Based Event-Triggered Control for Unknown Continuous-Time Nonlinear Systems with Input Constraints},
  author = {Xue, Shan and Luo, Biao and Liu, Derong and Li, Yueheng},
  year = 2020,
  journal = {Neurocomputing},
  volume = {396},
  pages = {191--200}
}

@incollection{yaoLEARNINGADOPTIONUNDERSTANDING2025,
  title = {{{From learning to adoption}}: {{Understanding marketing students}}' {{acceptance of generative ai technologies}}},
  shorttitle = {{{FROM LEARNING TO ADOPTION}}},
  booktitle = {2025 {{Global Marketing Conference}} at {{Hong Kong Proceedings}}},
  author = {Yao, Yao and Tang, Zezhi and Zeng, Lei},
  year = 2025,
  langid = {english}
}

@article{zhangTabNetLocallyInterpretable2024,
  title = {{{TabNet}}: {{Locally Interpretable Estimation}} and {{Prediction}} for {{Advanced Proton Exchange Membrane Fuel Cell Health Management}}},
  shorttitle = {{{TabNet}}},
  author = {Zhang, Benyuan and Jin, Xin and Liang, Wenyu and Chen, Xiaoyu and Li, Zhenhong and Panoutsos, George and Liu, Zepeng and Tang, Zezhi},
  year = 2024,
  journal = {Electronics},
  volume = {13},
  number = {7},
  pages = {1358},
  publisher = {Multidisciplinary Digital Publishing Institute},
  langid = {english}
}

@inproceedings{zhaoRealTimeObjectDetection2024,
  title = {Real-{{Time Object Detection}} and {{Robotic Manipulation}} for {{Agriculture Using}} a {{YOLO-Based Learning Approach}}},
  booktitle = {2024 {{IEEE International Conference}} on {{Industrial Technology}} ({{ICIT}})},
  author = {Zhao, Hongyu and Tang, Zezhi and Li, Zhenhong and Dong, Yi and Si, Yuancheng and Lu, Mingyang and Panoutsos, George},
  year = 2024,
  pages = {1--6},
  eventtitle = {2024 {{IEEE International Conference}} on {{Industrial Technology}} ({{ICIT}})}
}

@book{tang2019control,
  title={Control design for the active magnetic bearing system},
  author={Tang, Zezhi},
  year={2019},
  publisher={The University of Manchester (United Kingdom)}
}

\end{document}